\newtheorem{theorem}{Theorem}[section]
\newtheorem{proposition}[theorem]{Proposition}
\theoremstyle{definition}
\theoremstyle{remark}
\newtheorem{claim}{Claim}
\numberwithin{equation}{section}
\newcommand{\abs}[1]{\lvert#1\rvert}
\begin{document}

\title{The fixed point property in a Banach space isomorphic to $c_0$}

\author{Costas Poulios}
\address{Department of Mathematics, University of Athens, 15784, Athens, Greece}
\email{k-poulios@math.uoa.gr}


\subjclass[2010]{Primary 47H10, 47H09, 46B25.}

\keywords{Non-expansive mappings, fixed point property, Banach
spaces isomorphic to $c_0$.}

\date{}

\dedicatory{}

\commby{}

\begin{abstract}
We consider a Banach space, which comes naturally from $c_0$ and it
appears in the literature, and we prove that this space has the
fixed point property for non-expansive mappings.
\end{abstract}

\maketitle

\section{Introduction}\label{sec.introduction}
Let $K$ be a weakly compact, convex subset of a Banach space $X$. A
mapping $T:K\to K$ is called \emph{non-expansive} if
$\left\|Tx-Ty\right\|\le\left\|x-y\right\|$ for any $x,y\in K$. In
the case where every non-expansive map $T:K\to K$ has a fixed point,
we say that $K$ has the \emph{fixed point property}. The space $X$
is said to have the fixed point property if every weakly compact,
convex subset of $X$ has the fixed point property.

A lot of Banach spaces are known to enjoy the aforementioned
property. The earlier results show that uniformly convex spaces have
the fixed point property (see \cite{Br}) and this is also true for
the wider class of spaces with normal structure (see \cite{Kirk}).
The classical Banach spaces $\ell_p, L_p$ with $1<p<\infty$ are
uniformly convex and hence they have the fixed point property. On
the contrary, the space $L_1$ fails this property (see \cite{Als}).

The proofs of many positive results depend on the notion of minimal
invariant sets. Suppose that $K$ is a weakly compact, convex set,
$T:K\to K$ is a non-expansive mapping and $C$ is a nonempty, weakly
compact, convex subset of $K$ such that $T(C)\subseteq C$. The set
$C$ is called \emph{minimal} for $T$ if there is no strictly smaller
weakly compact, convex subset of $C$ which is invariant under $T$. A
straightforward application of Zorn's lemma implies that $K$ always
contains minimal invariant subsets. So, a standard approach in
proving fixed points theorems is to first assume that $K$ itself is
minimal for $T$ and then use the geometrical properties of the space
to show that $K$ must be a singleton. Therefore, $T$ has a fixed
point.

Although a non-expansive map $T:K\to K$ does not have to have fixed
points, it is well-known that $T$ always has an \emph{approximate
fixed point sequence}. This means that there is a sequence $(x_n)$
in $K$ such that $\lim_{n\to\infty} \left\|x_n-Tx_n\right\|=0$. For
such sequences, the following result holds (see \cite{Karl}).

\begin{theorem}\label{th.Karl}
Let $K$ be a weakly compact, convex set in a Banach space, $T:K\to
K$ a non-expansive map, such that $K$ is $T$-minimal, and let
$(x_n)$ be any approximate fixed point sequence. Then, for all $x\in
K$, $$\lim_{n\to \infty}\|x-x_n\|=diam(K).$$
\end{theorem}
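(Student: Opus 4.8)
The plan is to run the classical ``minimal invariant set'' argument in two stages: first identify $\limsup_{n}\|x_n-x\|$ as a constant $r$ independent of $x$, and then identify $r$ with $\operatorname{diam}(K)$. \emph{Stage 1.} Define $\rho\colon K\to[0,\infty)$ by $\rho(x)=\limsup_{n}\|x_n-x\|$. Then $\rho$ is convex and $1$-Lipschitz, hence norm-continuous, and, being convex and norm-continuous, it is weakly lower semicontinuous; since $K$ is weakly compact, $\rho$ attains its minimum $r:=\min_{x\in K}\rho(x)$. The sublevel set $M=\{x\in K:\rho(x)=r\}$ is nonempty, norm-closed and convex, hence a weakly compact convex subset of $K$, and it is $T$-invariant: for $x\in M$, $\|x_n-Tx\|\le\|x_n-Tx_n\|+\|Tx_n-Tx\|\le\|x_n-Tx_n\|+\|x_n-x\|$, and taking $\limsup_n$ (using $\|x_n-Tx_n\|\to0$) gives $\rho(Tx)\le\rho(x)=r$, so $Tx\in M$. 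By minimality of $K$, $M=K$, i.e. $\limsup_n\|x_n-x\|=r$ for all $x\in K$. Since $x_n,x\in K$, trivially $r\le\operatorname{diam}(K)$.

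\emph{Stage 2.} I claim $r=\operatorname{diam}(K)$, and minimality is used again. First, $\overline{\operatorname{conv}}\,T(K)$ is a nonempty, norm-closed (hence weakly compact) convex subset of $K$ that is $T$-invariant, since any $w$ in it lies in $K$ and then $Tw\in T(K)\subseteq\overline{\operatorname{conv}}\,T(K)$; minimality gives $K=\overline{\operatorname{conv}}\,T(K)$. Next, by the Eberlein--\v{S}mulian theorem choose a subsequence $x_{n_k}\rightharpoonup w\in K$; weak lower semicontinuity of the norm gives $\|w-x\|\le\liminf_k\|x_{n_k}-x\|\le\limsup_n\|x_n-x\|=r$ for every $x\in K$. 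Hence the set $Z=\{z\in K:\|z-x\|\le r\text{ for all }x\in K\}$ is nonempty, and it is norm-closed and convex. Crucially $Z$ is $T$-invariant: given $z\in Z$ and $x\in K=\overline{\operatorname{conv}}\,T(K)$, approximate $x$ in norm by finite convex combinations $\sum_i\lambda_iTv_i$ with $v_i\in K$, and note $\|Tz-\sum_i\lambda_iTv_i\|\le\sum_i\lambda_i\|Tz-Tv_i\|\le\sum_i\lambda_i\|z-v_i\|\le r$, so in the limit $\|Tz-x\|\le r$ and $Tz\in Z$. By minimality $Z=K$, which says precisely $\operatorname{diam}(K)\le r$; together with Stage~1 this yields $r=\operatorname{diam}(K)$.

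\emph{From $\limsup$ to $\lim$.} Always $\|x_n-x\|\le\operatorname{diam}(K)$. If $\liminf_n\|x_n-x\|<\operatorname{diam}(K)$ for some $x\in K$, pick a subsequence along which $\|x_{n_k}-x\|$ tends to this $\liminf$; that subsequence is again an approximate fixed point sequence, so Stages 1--2 apply to it and force $\limsup_k\|x_{n_k}-x\|=\operatorname{diam}(K)$, a contradiction. Hence $\lim_n\|x_n-x\|=\operatorname{diam}(K)$, as required.

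I expect the main obstacle to be Stage 2, and specifically the choice of the $T$-invariant set. The most tempting candidates --- the closed convex hull of $\{x_n\}$, or the closure of an orbit $\{T^jy\}_j$ --- fail to be $T$-invariant, because $T$ need not be affine and an orbit may drift arbitrarily far even when consecutive steps $\|T^{j+1}y-T^jy\|$ are uniformly tiny. The argument succeeds only because of the interplay of two facts: $K=\overline{\operatorname{conv}}\,T(K)$ (so every point of $K$ is a limit of convex combinations of points $Tv$, $v\in K$), and the non-expansiveness of $T$, which controls $\|Tz-Tv\|$ by $\|z-v\|$ and hence survives the passage to those convex combinations and their closure.
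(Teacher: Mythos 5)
Your proof is correct. The paper offers no proof of this statement (it is quoted from the cited reference of Karlovitz), and your argument is precisely the standard one for the Goebel--Karlovitz lemma: minimize the asymptotic radius function $\rho$, use minimality twice (once on $\{\rho=r\}$, once on the set of points whose $r$-ball contains $K$, whose $T$-invariance rests on $K=\overline{\operatorname{conv}}\,T(K)$), and upgrade $\limsup$ to $\lim$ by passing to subsequences. All the steps, including the weak lower semicontinuity of $\rho$ and the invariance checks, are sound.
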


Although from the beginning of the theory it became clear that the
classical spaces $\ell_p,L_p$, $1<p<\infty$ have the fixed point
property, the case of $c_0$ remained unsolved for some period of
time. The geometrical properties of this space are not very nice, in
the sense that $c_0$ does not possess normal structure. However, it
was finally proved that the geometry of $c_0$ is still good enough
and and it does not allow the existence of minimal sets with
positive diameter, that is $c_0$ has the fixed point property. This
was done by B. Maurey \cite{Maurey} (see also \cite{Elton}) who also
proved that every reflexive subspace of $L_1$ has the fixed point
property.

\begin{theorem}\label{th.Maurey-1}
The space $c_0$ has the fixed point property.
\end{theorem}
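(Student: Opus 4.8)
The plan is to argue by contradiction, combining the reduction to a minimal invariant set (and Theorem~\ref{th.Karl}) with the peculiar geometry of the sup-norm on $c_0$. So suppose $K$ is a weakly compact, convex subset of $c_0$ and $T\colon K\to K$ is non-expansive and fixed-point-free. Passing to a minimal invariant set we may assume $K$ is $T$-minimal, and the task becomes to contradict $d:=\operatorname{diam}(K)>0$. First I would fix an approximate fixed point sequence $(x_n)$ in $K$; by weak compactness a subsequence converges weakly to some $\xi\in K$, and replacing $K$ by $K-\xi$ and $T$ by $u\mapsto T(u+\xi)-\xi$ (still non-expansive, still $T$-minimal, with the same diameter), I may assume $x_n\rightharpoonup 0$ and $0\in K$. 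Theorem~\ref{th.Karl} then gives $\lim_n\|x-x_n\|=d$ for every $x\in K$; in particular $\|x_n\|\to d$, while $\|x\|\le d$ for all $x\in K$.

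The decisive input from $c_0$ is the elementary fact that for a bounded, weakly null sequence $(y_n)$ in $c_0$ and any $y\in c_0$,
\[
\limsup_n\|y+y_n\|=\max\Bigl\{\,\|y\|,\ \limsup_n\|y_n\|\,\Bigr\}.
\]
One proves this by splitting the coordinates into a finite head and its tail: on the head $y_n\to0$ in norm, so that block contributes at most $\|y\|$ asymptotically, and on a far enough tail $\|y\|$ is negligible, so that block contributes essentially $\|y_n\|$; the reverse inequality uses that the coordinates carrying the bulk of $\|y_n\|$ must run off to infinity, where $y$ is small. This is exactly the point at which $c_0$, as opposed to an arbitrary Banach space, is used.

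The heart of the proof is to manufacture, out of $(x_n)$ and $T$, a new approximate fixed point sequence $(z_k)$ in $K$ with $\limsup_k\|z_k\|\le\tfrac12 d$: granting this, Theorem~\ref{th.Karl} applied to $(z_k)$ yields $\|z_k\|=\|z_k-0\|\to d$, the required contradiction, forcing $d=0$. To build $(z_k)$ I would first replace $(x_n)$ by a subsequence which, by a gliding-hump argument, is a norm-summable perturbation of a block basis of the unit vector basis of $c_0$; thus we may take the $x_n$ to have (almost) pairwise disjoint supports with $\|x_n\|\to d$, so that $\|x_n-x_m\|\to d$ for $n\ne m$, and, by the displayed identity, $\|u+\tfrac12 x_n\|$ tends to $\max\{\|u\|,\tfrac12 d\}$ for each fixed $u$. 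One then produces $(z_k)$ by an inductive interleaving of $T$ with far-out terms of this block sequence: having $z_{k-1}$, one forms $z_k$ as an average of $Tz_{k-1}$ with some $x_{m_k}$ whose support lies beyond the effective support of $Tz_{k-1}$, so that disjointness collapses $\|z_k\|$ down to roughly $\tfrac12\max\{\|Tz_{k-1}\|,d\}\approx\tfrac12 d$, while the index $m_k$ is chosen large enough that all the error terms measuring the displacement $\|Tz_k-z_k\|$ are squeezed toward zero.

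I expect this last step to be the genuine obstacle. The naive candidates — a long Cesàro average $\frac1k\sum_{j\le k}x_{n_j}$, or a midpoint $\tfrac12(x_n+x_m)$ — do give vectors of norm about $\tfrac12 d$ inside $K$, but they are not approximate fixed points: since the ball of $c_0$ is a cube rather than a strictly convex body, non-expansiveness of $T$ only confines $Tz_k$ to a large intersection of balls around the constituent points, never near $z_k$ itself, so the displacement need not tend to $0$. Arranging the recursion so that $\|Tz_k-z_k\|\to0$ while $\|z_k\|$ stays near $\tfrac12 d$ — by exploiting the disjointness of supports to keep every perturbation and every triangle-inequality slack under control — is where essentially all the work lies; the reduction to a minimal set, the translation, the two appeals to Theorem~\ref{th.Karl}, and the final contradiction are then routine. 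Having reached the contradiction one concludes that $\operatorname{diam}(K)=0$, so every minimal $T$-invariant set is a singleton, $T$ has a fixed point, and hence $c_0$ has the fixed point property.
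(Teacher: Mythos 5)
Your overall frame is the right one and matches how this theorem is actually proved: reduce to a $T$-minimal $K$ with $\operatorname{diam}(K)=d>0$, translate so that an approximate fixed point sequence $(x_n)$ is weakly null and $0\in K$, invoke Theorem~\ref{th.Karl} to force $\|w_n\|\to d$ for \emph{every} approximate fixed point sequence $(w_n)$, and then derive a contradiction by exhibiting an approximate fixed point sequence of norm roughly $d/2$, using the disjoint-support structure of the sup norm. The elementary $c_0$ identity you isolate is correct and is indeed where the geometry of $c_0$ enters.

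However, the argument has a genuine gap exactly where you suspect it does, and your ``inductive interleaving'' does not close it. Averaging $Tz_{k-1}$ with a far-out block $x_{m_k}$ produces a vector of norm about $d/2$ lying in $K$, but nothing in the non-expansiveness of $T$ forces $\|Tz_k-z_k\|\to 0$: as you observe yourself, $Tz_k$ is only confined to an intersection of large balls around the constituent points, and choosing $m_k$ ``large enough'' introduces no quantity that actually improves with $k$. The missing ingredient is precisely Theorem~\ref{th.Maurey-2}: the set of approximate fixed point sequences in a minimal $K$ is metrically convex, so from two disjointly supported (subsequences of) approximate fixed point sequences at mutual distance tending to $d$ one obtains a third approximate fixed point sequence $(z_n)$ at asymptotic distance $d/2$ from each; disjointness of supports in $c_0$ then forces $\|z_n\|\le d/2+o(1)$, contradicting $\|z_n\|\to d$. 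Theorem~\ref{th.Maurey-2} is itself the nontrivial heart of Maurey's proof: one passes to an ultrapower $\tilde K$ of $K$, where approximate fixed point sequences become genuine fixed points of the induced non-expansive map $\tilde T$, and shows that the metric-midpoint set of two such fixed points is a nonempty, bounded, closed, convex, $\tilde T$-invariant set in which $\tilde T$ again has a fixed point. Without that lemma (or an equivalent substitute) your construction does not go through; with it, the step you are trying to carry out by hand becomes immediate.
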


The proof of Theorem \ref{th.Maurey-1} is based on the fact that the
set of approximate fixed point sequences is convex in a natural
sense. More precisely, we have the following (\cite{Maurey},
\cite{Elton}).

\begin{theorem}\label{th.Maurey-2}
Let $K$ be a weakly compact, convex subset of a Banach space which
is minimal for a non-expansive map $T:K\to K$. Let $(x_n)$ and
$(y_n)$ be approximate fixed point sequences for $T$ such that
$\lim_{n\to\infty}\|x_n-y_n\|$ exists. Then there is an approximate
fixed point sequence $(z_n)$ in $K$ such that
$$\lim_{n\to\infty}\|x_n-z_n\|=\lim_{n\to\infty}\|y_n-z_n\|=\frac{1}{2}\lim_{n\to\infty}\|x_n-y_n\|.$$
\end{theorem}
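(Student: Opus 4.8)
The plan is to pass to a Banach space ultrapower, where the two approximate fixed point sequences become genuine fixed points and the ``lens'' between them becomes a genuine invariant set, and then to descend back to $K$ by a diagonal argument. Write $d=\lim_{n}\|x_n-y_n\|$. Fix a free ultrafilter $\mathcal U$ on $\mathbb N$ and form the ultrapower $\widetilde X=(X)_{\mathcal U}$, inside which sit $\widetilde K=\{(z_n)_{\mathcal U}:z_n\in K\}$ (a bounded, closed, convex set) and the induced non-expansive map $\widetilde T((z_n)_{\mathcal U})=(Tz_n)_{\mathcal U}$. Since $(x_n)$ and $(y_n)$ are approximate fixed point sequences, the points $\tilde x=(x_n)_{\mathcal U}$ and $\tilde y=(y_n)_{\mathcal U}$ satisfy $\widetilde T\tilde x=\tilde x$, $\widetilde T\tilde y=\tilde y$, and $\|\tilde x-\tilde y\|=\lim_{\mathcal U}\|x_n-y_n\|=d$.

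The core step I would establish is: for every $\varepsilon>0$ and every infinite set $M\subseteq\mathbb N$ there are $m\in M$ and $z\in K$ with $\|z-Tz\|<\varepsilon$, $\|z-x_m\|<\tfrac d2+\varepsilon$ and $\|z-y_m\|<\tfrac d2+\varepsilon$. For this I would choose the ultrafilter $\mathcal U$ so that $M\in\mathcal U$, and consider $W=\{\tilde z\in\widetilde K:\|\tilde z-\tilde x\|\le \tfrac d2,\ \|\tilde z-\tilde y\|\le\tfrac d2\}$. This set is nonempty (it contains $\tfrac12(\tilde x+\tilde y)$, which lies at distance exactly $d/2$ from each of $\tilde x,\tilde y$ since these operations are just linear combinations in $\widetilde X$), bounded, closed and convex; and because $\tilde x,\tilde y$ are fixed by $\widetilde T$, non-expansiveness gives $\widetilde T(W)\subseteq W$. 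On the complete, bounded, convex set $W$ the standard Banach-contraction device applies: for a fixed $w_0\in W$ and $t\in(0,1)$ the $t$-contraction $\tilde z\mapsto (1-t)w_0+t\widetilde T\tilde z$ maps $W$ into $W$ and has a unique fixed point $\tilde z_t\in W$ with $\|\tilde z_t-\widetilde T\tilde z_t\|=(1-t)\|w_0-\widetilde T\tilde z_t\|\le(1-t)\,\mathrm{diam}(W)$, so taking $t$ close to $1$ produces $\tilde v\in W$ with $\|\tilde v-\widetilde T\tilde v\|<\varepsilon/2$. Writing $\tilde v=(v_n)_{\mathcal U}$ with $v_n\in K$, the set of indices $n$ for which simultaneously $\|v_n-Tv_n\|<\varepsilon$, $\|v_n-x_n\|<\tfrac d2+\varepsilon$ and $\|v_n-y_n\|<\tfrac d2+\varepsilon$ lies in $\mathcal U$ (each of the three conditions defines a set in $\mathcal U$, since the corresponding ultrafilter limit is strictly below the stated bound), hence meets $M$; any $m$ in the intersection, together with $z=v_m$, does the job.

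Since $M$ was an arbitrary infinite subset of $\mathbb N$, it follows that for each $\varepsilon>0$ the set $G_\varepsilon$ of indices admitting such a $z$ is cofinite. Then I would diagonalize: choose $M_1\le M_2\le\cdots\to\infty$ with $[M_k,\infty)\subseteq G_{1/k}$, and for $m$ with $M_k\le m<M_{k+1}$ let $z_m$ be a point witnessing $m\in G_{1/k}$ (setting $z_m=x_m$ for the finitely many small $m$). Then $\|z_m-Tz_m\|\to0$, so $(z_m)$ is an approximate fixed point sequence; moreover $\limsup_m\|z_m-x_m\|\le d/2$ and $\limsup_m\|z_m-y_m\|\le d/2$, while $\|z_m-x_m\|+\|z_m-y_m\|\ge\|x_m-y_m\|\to d$ forces both limits to equal $d/2$, which is the assertion.

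The main obstacle is conceptual rather than computational: the naive candidate $z_n=\tfrac12(x_n+y_n)$ is \emph{not} an approximate fixed point sequence, because $T$ is not affine and $T\big(\tfrac12(x_n+y_n)\big)$ need not be close to $\tfrac12(Tx_n+Ty_n)$. The ultrapower is precisely the device that repairs this — there $\tilde x,\tilde y$ are honest fixed points, so the lens $W$ is honestly $\widetilde T$-invariant and therefore contains honest near-fixed-points — and the only remaining subtlety is to make the descent uniform in the index, i.e.\ to control $\|z_n-x_n\|$ and $\|z_n-y_n\|$ for \emph{all} large $n$ rather than merely along a subsequence; this is exactly why the key step is phrased as ``meeting every infinite $M$'', which upgrades to cofiniteness of $G_\varepsilon$. (It is worth noting that neither minimality of $K$ nor Theorem \ref{th.Karl} is actually used in this argument; they become relevant only when the present lemma is iterated.)
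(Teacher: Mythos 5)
The paper does not prove this statement: it is quoted as a known result of Maurey, with references to \cite{Maurey} and \cite{Elton}. Your ultrapower argument is correct and is essentially the canonical proof from those sources --- pass to $\widetilde K\subseteq (X)_{\mathcal U}$, where $\tilde x,\tilde y$ become genuine fixed points, observe that the lens $W$ is a nonempty, closed, bounded, convex, $\widetilde T$-invariant set, extract approximate fixed points of $\widetilde T$ in $W$ by the $t$-contraction device, and descend; your ``meets every infinite $M$, hence $G_\varepsilon$ is cofinite'' step correctly handles the only delicate point, namely getting control for all large $n$ rather than along a subsequence, and the closing observation that $\|z_m-x_m\|+\|z_m-y_m\|\ge\|x_m-y_m\|$ upgrades the two limsup bounds to the stated limits.
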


In the present paper, we define a Banach space $X$ isomorphic to
$c_0$ and we prove that this space has the fixed point property. Our
interest on this space derives from several reasons. Firstly, the
space $X$ comes from $c_0$ in a natural way. In fact, the Schauder
basis of $X$ is equivalent to the summing basis of $c_0$. Secondly,
the space $X$ is close to $c_0$ in the sense that the Banach-Mazur
distance between the two spaces is equal to $2$. It is worth
mentioning that from the proof of Theorem \ref{th.Maurey-1} we can
conclude that whenever $Y$ is a Banach space isomorphic to $c_0$ and
the Banach-Mazur distance between $Y$ and $c_0$ is strictly less
than $2$, then $Y$ has the fixed point property. In our case, the
Banach-Mazur distance is equal to $2$, that is the space $X$ lies on
the boundary of what is already known. This fact should also be
compared with the following question in metric fixed point theory:
Find a nontrivial class of Banach spaces invariant under isomorphism
such that each member of the class has the fixed point property (a
trivial example is the class of spaces isomorphic to $\ell_1$). We
shall see that even for spaces close to $c_0$, such as the space
$X$, the situation is quite complicated and this points out the
difficulty of the aforementioned question. Finally, the space $X$
has been used in several places in the study of the geometry of
Banach spaces (for instance see \cite{Hagler}, \cite{Arg}). More
precisely, the well-known Hagler Tree space ($HT$) \cite{Hagler}
contains a plethora of subspaces isomorphic to $X$. Nevertheless, we
do not know if $HT$ has the fixed point property.

\section{Definition and basic properties}\label{sec.definition}
We consider the vector space $c_{00}$ of all real-valued finitely
supported sequences. We let $(e_n)_{n\in\mathbb{N}}$ stand for the
usual unit vector basis of $c_{00}$, that is $e_n(i)=1$ if $i=n$ and
$e_n(i)=0$ if $i\neq n$. If $S\subset \mathbb{N}$ is any
\emph{interval} of integers and $x=(x_i)\in c_{00}$ then we set
$S^\ast(x)=\sum_{i\in S}x_i$. We now define the norm of $x$ as
follows
\begin{equation*}
\|x\|=\sup \abs{S^\ast(x)}
\end{equation*}
where the supremum is taken over all finite intervals
$S\subset\mathbb{N}$. The space $X$ is the completion of the normed
space we have just defined.

It is easily verified that the sequence $(e_n)$ is a normalized
monotone Schauder basis for the space $X$. In the following,
$(e^\ast_n)_{n\in\mathbb{N}}$ denotes the sequence of the
biorthogonal functionals and $(P_n)_{n\in\mathbb{N}}$ denotes the
sequence of the natural projections associated to the basis $(e_n)$.
That is, for any $x=\sum_{i=1}^\infty x_i e_i \in X$ we have
$e_n^\ast(x)=x_n$ and $P_n(x)=\sum_{i=1}^n x_i e_i$.

Furthermore, if $S\subset\mathbb{N}$ is any interval of integers
(not necessarily finite), we define the functional
$S^\ast:X\to\mathbb{R}$ by $S^\ast(x)=S^\ast(\sum_{i=1}^\infty x_i
e_i)=\sum_{i\in S} x_i$. It is easy to see that $S^\ast$ is a
bounded linear functional with $\|S^\ast\|=1$. In the special case
where $S=\mathbb{N}$, the corresponding functional is denoted by
$B^\ast$ (instead of the confusing $\mathbb{N}^\ast$). Therefore,
$B^\ast(x)=\sum_{i=1}^\infty x_i$ for any $x=\sum_{i=1}^\infty x_i
e_i \in X$.

The following proposition provides some useful properties of the
space $X$ and demonstrates the relation between $X$ and $c_0$. We
remind that for any pair $E,F$ of isomorphic normed spaces, the
Banach-Mazur distance between $E$ and $F$ is defined as follows
\begin{equation*}
d(E,F)=\inf\{\|T\|\cdot\|T^{-1}\| \mid T:E\to F ~\text{is an
isomorphism from}~ E ~\text{onto}~ F\}.
\end{equation*}

\begin{proposition}\label{prop.}
The following hold:
\begin{enumerate}
    \item The space $X$ is isomorphic to $c_0$ and in particular the
    basis of $X$ is equivalent to the summing basis of $c_0$.
    \item The subspace of $X^\ast$ generated by the sequence of the
    biorthogonal functionals has codimension $1$. More precisely,
    $X^\ast=\overline{span}\{e_n^\ast\}_{n\in\mathbb{N}}\oplus
    \langle B^\ast \rangle$.
    \item The Banach-Mazur distance $d(X,c_0)$ between $X$ and $c_0$
    is equal to $2$.
\end{enumerate}
\end{proposition}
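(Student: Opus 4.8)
The plan is to establish the three claims in order, building on the explicit description of the norm.

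\medskip

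\noindent\textbf{Part (1).} First I would show directly that $(e_n)$ is equivalent to the summing basis $(s_n)$ of $c_0$, where $s_n = f_1 + \dots + f_n$ for the unit vector basis $(f_n)$ of $c_0$. Recall that $\|\sum_{i=1}^n a_i s_i\|_{c_0} = \max_{1\le k\le n}\bigl|\sum_{i=k}^n a_i\bigr|$, i.e. the supremum of $|S^\ast(a)|$ over \emph{final} intervals $S$. For a general finite interval $S = [k,m]$ we have $S^\ast(x) = [k,\infty)^\ast(x) - [m+1,\infty)^\ast(x)$ (interpreting these as the summing-basis norms on the tail), so $\|x\|_X \le 2\|x\|_{\mathrm{summing}}$; conversely a final interval is itself an interval, so $\|x\|_{\mathrm{summing}} \le \|x\|_X$. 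Hence the two norms are equivalent with constant $2$, and in particular $X \cong c_0$ since the summing basis spans (a copy of) $c_0$.

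\medskip

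\noindent\textbf{Part (2).} I would verify that $B^\ast \notin \overline{\mathrm{span}}\{e_n^\ast\}$: indeed every element of that closed span annihilates the weak-limit direction, whereas computing $B^\ast$ on the vectors $u_N = e_1 + \dots + e_N$ gives $B^\ast(u_N) = N$ while $\|u_N\|_X = 1$, so $B^\ast$ is not in the norm closure of the finitely-supported functionals (each of which is bounded on $(u_N)$). For the direct sum decomposition, given $x^\ast \in X^\ast$ put $c = \lim_n x^\ast(u_n)$ — this limit exists and is finite because $(u_n)$ is weakly Cauchy in $X$ (it corresponds to the summing basis, whose partial sums converge weak$^\ast$ in $\ell_\infty = c_0^{\ast\ast}$) — and check that $x^\ast - c\,B^\ast$ kills the limit functional, hence lies in $\overline{\mathrm{span}}\{e_n^\ast\}$. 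Uniqueness of the decomposition follows from $B^\ast \notin \overline{\mathrm{span}}\{e_n^\ast\}$. This shows the codimension is exactly $1$.

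\medskip

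\noindent\textbf{Part (3).} The inequality $d(X,c_0)\le 2$ is immediate from the constant-$2$ equivalence in Part (1) (take the isomorphism sending $e_n$ to $s_n$; explicitly $\|T\|\le 1$ if we normalize correctly and $\|T^{-1}\|\le 2$, or symmetrize to get a product $\le 2$). The content, and the main obstacle, is the lower bound $d(X,c_0)\ge 2$. Here I expect one must argue that $X$ cannot be $(2-\varepsilon)$-isomorphic to $c_0$: the natural route is to exploit the fact that, as shown in the computation for Part (2), $X$ contains a normalized sequence $(u_n)$ that is weakly Cauchy but not weakly null and whose "summing" character is rigid — any isomorphism $T\colon X\to c_0$ must send this weakly Cauchy non-convergent sequence to one of the same type in $c_0$, and a James-type / summing-basis estimate then forces $\|T\|\,\|T^{-1}\|\ge 2$. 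Concretely, one can use that for the summing basis the quantity $\inf\{\|x\| : B^\ast(x)=1,\ x\in\mathrm{conv}\ \text{of a far tail}\}$ behaves differently in $X$ than the analogous quantity for any renorming of $c_0$ by a factor better than $2$; making this precise — e.g. via a perturbation/gliding-hump argument producing on one side a block basis $1$-equivalent to the $c_0$-basis and on the other a vector of norm close to $\tfrac12$ but with functional value $1$ — is the delicate step, and I would expect the author to isolate it as the heart of the proof.
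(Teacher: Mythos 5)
Parts (1) and the upper bound $d(X,c_0)\le 2$ are correct and essentially the paper's argument (the map $e_n\mapsto s_n$ is exactly the paper's $\Phi$, with $\|\Phi\|=1$, $\|\Phi^{-1}\|=2$). Part (2), however, contains a false computation: for $u_N=e_1+\dots+e_N$ the interval $S=[1,N]$ gives $S^\ast(u_N)=N$, so $\|u_N\|_X=N$, not $1$ (if it were $1$, $B^\ast$ would be unbounded, whereas $\|B^\ast\|=1$); consequently $(u_N)$ is unbounded and certainly not weakly Cauchy. The sequence you actually want is $(e_n)$ itself: it is normalized, equivalent to the summing basis and hence weakly Cauchy, $B^\ast(e_n)=1$ for all $n$ while every $f\in\overline{\mathrm{span}}\{e_m^\ast\}$ satisfies $f(e_n)\to 0$, and setting $c=\lim_n x^\ast(e_n)$ yields the decomposition. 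So Part (2) is repairable but not correct as written. (The paper itself only asserts (2) as a consequence of $\Phi$, under which $X^\ast$ is identified with $\ell_1$ spanned by the functionals $[j,\infty)^\ast=B^\ast-\sum_{i<j}e_i^\ast$.)

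The genuine gap is the lower bound $d(X,c_0)\ge 2$, which you explicitly defer as ``the delicate step''; your proposed route through the weakly Cauchy summing-type sequence and an unspecified ``James-type estimate'' is not an argument, and it also points in the wrong direction. The paper's obstruction is carried by a \emph{weakly null} sequence, not the weakly Cauchy one: take $x_n=-e_{2n-1}+e_{2n}$, which is normalized, and weakly null because by (2) it suffices to test against the $e_m^\ast$ and against $B^\ast$, and $B^\ast(x_n)=0$. For $m<n$ the interval $S=[2m,2n-1]$ gives $S^\ast(x_m-x_n)=1+1=2$, so $\|x_m-x_n\|=2$: a normalized weakly null sequence whose pairwise distances equal twice its norm. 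If $T:X\to c_0$ satisfies $\|x\|\le\|Tx\|_{c_0}\le(d+\epsilon)\|x\|$, then $(Tx_n)$ is weakly null in $c_0$ with norms in $[1,d+\epsilon]$, and a gliding-hump argument produces $k$ with $Tx_1,Tx_k$ essentially disjointly supported, whence $\|Tx_1-Tx_k\|_{c_0}\le\max\{\|Tx_1\|_{c_0},\|Tx_k\|_{c_0}\}+\epsilon\le d+2\epsilon$, while $\|Tx_1-Tx_k\|_{c_0}\ge\|x_1-x_k\|=2$. Letting $\epsilon\to 0$ gives $d\ge 2$. This is the key idea missing from your proposal: in $c_0$ disjointly supported vectors do not add in norm, so any $(2-\varepsilon)$-embedding of $X$ would have to compress the separation of this weakly null sequence below $2$, which the lower estimate forbids.
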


\begin{proof}
We define the linear operator
\begin{align*}
    \Phi:X&\to c_0 \\
    x=(x_i) &\mapsto \Big(\sum_{i=1}^\infty x_i, \sum_{i=2}^\infty
    x_i, \ldots\Big).
\end{align*}
It is easily verified that $\Phi$ is an isomorphism from $X$ onto
$c_0$ with $\|\Phi\|=1$, $\|\Phi^{-1}\|=2$ and $\Phi$ maps the basis
of $X$ to the summing basis of $c_0$. This proves the first
assertion. The second assertion is an immediate consequence of the
relation between $X$ and $c_0$ established above.

It remains to show that the Banach-Mazur distance $d=d(X,c_0)$ is
equal to $2$. Firstly, we observe that the isomorphism $\Phi$
defined above implies that $d\le 2$. In order to prove the reverse
inequality we fix a real number $\epsilon>0$. Then there exists an
isomorphism $T:X\to c_0$ from $X$ onto $c_0$ such that $\|x\|\le
\|Tx\|_{c_0}\le (d+\epsilon) \|x\|$ for any $x\in X$. We now
consider the normalized sequence $(x_n)$ in $X$ where
$x_n=(x_n(i))_{i\in\mathbb{N}}$ is defined by
\begin{equation*}
x_n(2n-1)=-1, ~ x_n(2n)=1, ~ x_n(i)=0 ~\text{otherwise}.
\end{equation*}
The description of $X^\ast$ given by the second assertion implies
that any bounded sequence $(t_n)_{n\in\mathbb{N}}$ of elements of
$X$ converges weakly to $0$ if and only if $e_m^\ast(t_n)\to 0$ for
every $m\in\mathbb{N}$ and $B^\ast(t_n)\to 0$. It follows that the
sequence $(x_n)_{n\in\mathbb{N}}$ defined above is weakly null. Now
we set $y_n=T(x_n)$ for any $n\in\mathbb{N}$ and we have $1\le
\|y_n\|_{c_0}\le d+\epsilon$ and $(y_n)_{n\in\mathbb{N}}$ converges
weakly to $0$. Therefore, we find $k_1\in\mathbb{N}$ such that the
vectors $y_1$ and $y_{k_1}$ have essentially disjoint supports. More
precisely, since $y_1\in c_0$, there exists $N_1\in\mathbb{N}$ such
that $\abs{y_1(i)}<\epsilon$ for any $i>N_1$. Since $y_n\to 0$
weakly, we find $k_1$ so that $\abs{y_{k_1}(i)}<\epsilon$ for any
$i\le N_1$. It follows that $\|y_1-y_{k_1}\|_{c_0}\leq \max
\{\|y_1\|_{c_0}, \|y_{k_1}\|_{c_0} \} +\epsilon \le d+2\epsilon$. On
the  other hand, $\|x_1-x_{k_1}\|=2$. Therefore,
\begin{equation*}
2=\|x_1-x_{k_1}\| \le \|y_1-y_{k_1}\|_{c_0}\leq d+2\epsilon.
\end{equation*}
If $\epsilon$ tends to $0$, we obtain $2\le d$ as we desire.
\end{proof}

\section{The fixed point property}\label{sec.fpp}
This section is entirely devoted to the proof of the fixed point
property for the space $X$. First we need to establish some
notation. If $S,S'\subset \mathbb{N}$ are intervals we write $S<S'$
to mean that $\max S<\min S'$. Moreover, if $k\in\mathbb{N}$, we
write $k<S$ (resp., $S<k$) to mean $k<\min S$ (resp., $\max S<k$).
Finally, for any $x=(x_i)\in X$, $\text{supp}(x)=\{i\in\mathbb{N}
\mid x_i\neq 0\}$ denotes the support of $x$.

\begin{theorem}\label{th.fpp}
The space $X$ has the fixed point property.
\end{theorem}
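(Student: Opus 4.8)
The plan is to argue by contradiction in the standard way: suppose $K$ is a weakly compact convex subset of $X$ which is minimal for a non-expansive $T:K\to K$ and has $\operatorname{diam}(K)=d>0$; after translating we may assume $0\in K$ and, by rescaling, normalize $d$. We fix an approximate fixed point sequence $(x_n)$ in $K$; passing to a subsequence, $(x_n)$ converges weakly to some $x_\infty\in K$, and replacing $K$ by $K-x_\infty$ and $T$ by the corresponding conjugate we may assume $x_n\to 0$ weakly. By Theorem 1.1, $\lim_n\|x-x_n\|=d$ for every $x\in K$, in particular $\lim_n\|x_n\|=d$. The heart of the matter is to exploit the special description of $X^\ast$ from Proposition 2.1(2): a bounded sequence $(t_n)$ in $X$ is weakly null iff $e_m^\ast(t_n)\to 0$ for each $m$ and $B^\ast(t_n)\to 0$. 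Thus, along a further subsequence, $(x_n)$ may be taken to be a small perturbation of a block sequence with respect to the basis $(e_n)$, and moreover $B^\ast(x_n)\to 0$. The norm on $X$ is an interval-sum norm, so for a block vector $x_n$ supported on an interval $I_n$, $\|x_n\|=\sup_{S\subseteq I_n}|S^\ast(x_n)|$, and the only way to get a ``long'' difference $\|x_m-x_n\|$ close to $d$ for $m<n$ is through an interval $S$ straddling the gap between the blocks; such an $S$ splits as $S_1\cup S_2$ with $S_1$ meeting $\operatorname{supp}(x_m)$ and $S_2$ meeting $\operatorname{supp}(x_n)$, giving $|S^\ast(x_m-x_n)|\le |S_1^\ast(x_m)| + |S_2^\ast(x_n)| \le \|x_m\|+\|x_n\|$, but because $B^\ast(x_n)\to 0$ one can show the relevant tail sums are essentially controlled, forcing $\limsup_{m,n}\|x_m-x_n\|$ to be governed by \emph{one-sided} interval sums of the $x_n$'s rather than the full norm.

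Concretely, I would introduce the two auxiliary quantities $a_n = \sup\{ S^\ast(x_n) : S \text{ an initial-type interval} \}$ and $b_n = \sup\{ -S^\ast(x_n) : S \text{ a terminal-type interval}\}$ (the positive and negative ``one-sided masses'' of $x_n$), refine to a subsequence so that $a_n\to a$, $b_n\to b$ with $a+b = d$ (this is where $B^\ast(x_n)\to 0$ and the block structure are used: the maximizing interval for $\|x_m - x_n\|$ picks up roughly $a$ from the left block and $b$ from the right block). Now apply Theorem 1.3 (Maurey's convexity) to the pair $(x_n)$ and $(x_{n+1})$ — or more cleanly to $(x_{2n})$ and $(x_{2n+1})$ — to produce an approximate fixed point sequence $(z_n)$ with $\lim\|z_n - x_{2n}\| = \lim\|z_n-x_{2n+1}\| = d/2$. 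The vector $z_n$ must then carry one-sided masses close to $a/2$ on the side of $x_{2n}$ and $b/2$ on the side of $x_{2n+1}$ simultaneously; iterating the halving, one builds approximate fixed point sequences whose one-sided masses shrink geometrically on \emph{both} sides, which after finitely many steps contradicts $\lim\|w_n\| = d > 0$ forced by Theorem 1.1 for every approximate fixed point sequence. This is the $c_0$-style argument of Maurey/Elton adapted to the interval-sum norm.

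The step I expect to be the main obstacle is the bookkeeping that converts ``$B^\ast(x_n)\to 0$ plus block structure'' into the clean statement that $\|x_m - x_n\|$ is asymptotically $a + b$ with $a,b$ the stable one-sided masses, uniformly enough that the halving procedure of Theorem 1.3 genuinely halves both $a$ and $b$. The subtlety is that an interval $S$ realizing $\|x_m - x_n\|$ can also reach into the interior of a single block $x_n$, not merely straddle the gap, so one must show such ``internal'' intervals cannot do better than the straddling ones in the limit — this uses that $\|x_n\|\to d = a+b$ together with the fact that, since $B^\ast(x_n) = (\text{left mass}) - (\text{right mass}) + o(1)\to 0$, the full-support sum is negligible, so the best internal interval and the best straddling interval agree asymptotically. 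A secondary technical point is handling the small perturbation between $(x_n)$ and an exact block sequence (standard gliding-hump / small-perturbation argument, costing an arbitrarily small $\epsilon$ at each stage), and verifying that the conclusion is invariant under the initial translation by $x_\infty$ and under rescaling $K$ to normalize the diameter.
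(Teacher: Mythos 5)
Your setup (minimal $K$, a weakly null approximate fixed point sequence refined to a block sequence, the role of $B^\ast$, and the observation that intervals straddling the gaps between blocks are the crux) matches the paper, but the endgame has a genuine gap. You propose to halve symmetrically between $(x_{2n})$ and $(x_{2n+1})$, argue that the one-sided masses of the resulting approximate fixed point sequence drop to about $a/2$ and $b/2$, iterate, and contradict $\lim\|w_n\|=d$ from Theorem 1.1. In the interval-sum norm this contradiction never materializes: after $N$ symmetric halvings each constituent block carries one-sided mass only about $d/2^{N}$, but a norming interval is free to straddle all of the roughly $2^{N}$ blocks involved and accumulate their contributions, so $\|w_n\|$ can (and, by Theorem 1.1, must) remain equal to $d$. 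This is precisely the obstruction that confines the naive Maurey $c_0$ argument to spaces whose Banach--Mazur distance to $c_0$ is strictly less than $2$; here the distance equals $2$, the norm is not dominated by per-block one-sided masses, and your ``after finitely many steps'' contradiction fails. (The preliminary claim that $a+b=d$ for stable one-sided masses is also left unjustified, and in any case is not the statement that is needed.)

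The paper's proof avoids this by halving \emph{asymmetrically}: Theorem 1.3 is applied $N$ times so as to produce an approximate fixed point sequence $(v_n)$ with $\lim\|v_n-z_n\|=\epsilon=2^{-N}$ and $\lim\|v_n-y_n\|=1-\epsilon$, where $(z_n)$ and $(y_n)$ are the odd- and even-indexed blocks. Since $\|v_n\|\to 1$ while $\|v_n-y_n\|<1-\epsilon/2$, any interval norming $v_n$ is forced to reach up to the right endpoint $k_n$ of $z_n$'s block, producing a terminal subinterval $R_n$ of $S_n$ with $R_n^\ast(v_n)>1-2\epsilon$; then $\abs{B^\ast(v_n)}<2\epsilon$ (inherited from the weak nullity of $(z_n)$, since $v_n$ is only $\epsilon$-far from $z_n$) forces a compensating subinterval $L_n<R_n$ with $L_n^\ast(v_n)<-1+7\epsilon$. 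The final contradiction is then not with $\|w_n\|\to d$ but with the diameter: the single interval $R_n\cup(k_n,m_n]\cup L_{n+1}$ separates $v_n$ from $v_{n+1}$ by almost $2>1=\mathrm{diam}(K)$. If you want to salvage your plan, it is this forced $\mp 1$ sign pattern inside a single block --- not a geometric decay of masses --- that you need to extract from the convexity theorem.
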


\begin{proof}
We follow the standard approach. We assume that $K$ is a weakly
compact, convex subset of $X$ which is minimal for a non-expansive
map $T:K\to K$. Using the geometry of the space $X$, we have to show
that $K$ is a singleton, that is $diam(K)=0$. Let us suppose that
$diam(K)>0$ and now we have to reach a contradiction. Without loss
of generality we may assume that $diam(K)=1$.

Let $(x_n)_{n\in\mathbb{N}}$ be an approximate fixed point sequence
for the map $T$ in the set $K$. By passing to a subsequence and then
using some translation, we may assume that $0\in K$ and $(x_n)$
converges weakly to $0$. Theorem \ref{th.Karl} implies that
$\lim_n\|x_n\|=diam(K)=1$.

We next find a subsequence $(x_{q_n})$ of $(x_n)$ and integers
$l_0=0<l_1<l_2<\ldots$ such that for every $n\in\mathbb{N}$,
$\|P_{l_{n-1}}(x_{q_n})\|<1/n$ and
$\|x_{q_n}-P_{l_n}(x_{q_n})\|<1/n$. The desired sequences
$(x_{q_n})$ and $(l_n)$ are constructed inductively. We start with
$x_{q_1}=x_1$ and $l_0=0$. Suppose that $q_1<q_2<\ldots<q_n$ and
$l_0<l_1<\ldots<l_{n-1}$ have been defined. Then there exists
$l_n>l_{n-1}$ such that $\|x_{q_n}-P_{l_n}(x_{q_n})\|<1/n$. Since
$(x_n)$ is weakly null, it follows that $P_m(x_n)\to 0$ for every
$m\in\mathbb{N}$. Therefore, there exists $q_{n+1}>q_n$ such that
$\|P_{l_n}(x_{q_{n+1}})\|<\frac{1}{n+1}$. The construction of
$(x_{q_n})$ and $(l_n)$ is complete.

Consequently, passing to a subsequence, we may assume that for the
original sequence $(x_n)$ there are integers $l_0=0<l_1<l_2<\ldots$
such that for every $n\in\mathbb{N}$,
\begin{equation*}
\|P_{l_{n-1}}(x_n)\|<\frac{1}{n} ~\text{and}~
\|x_n-P_{l_n}(x_n)\|<\frac{1}{n}.
\end{equation*}
As a matter of fact, we can go one step further and suppose that for
any $n\in\mathbb{N}$, $P_{l_{n-1}}(x_n)=0$ and $x_n-P_{l_n}(x_n)=0$.
Therefore, $\text{supp}(x_n) \subset (l_{n-1},l_n]$, that is $(x_n)$
is a block basis of $(e_n)$. If we did not adopt this assumption,
then in each inequality written below we would have to add a term
equal to $O(\frac{1}{n})$, which simply would change nothing.

We next consider the subsequences $(z_n)=(x_{2n-1})$ and
$(y_n)=(x_{2n})$ and we also set $l_{2n-1}=k_n$ and $l_{2n}=m_n$ for
every $n\in\mathbb{N}$. The properties of the sequence $(x_n)$ imply
that the following hold.
\begin{enumerate}
    \item $(z_n)$ and $(y_n)$ are approximate fixed point sequences
    for the map $T$. Therefore, by Theorem \ref{th.Karl},
    $\lim\|z_n\|=\lim\|y_n\|=1$.
    \item $(z_n)$ and $(y_n)$ converge weakly to $0$.
    \item $\text{supp}(z_n)\subset (m_{n-1},k_n]$ and $\text{supp}(y_n)\subset
    (k_n,m_n]$ for every $n\in\mathbb{N}$.
    \item $\lim\|z_n-y_n\|=1$.
\end{enumerate}
In order to justify the fourth conclusion, we first observe that
$\lim\|z_n-y_n\|\le diam(K)=1$. On the other hand, by the definition
of the norm of the space $X$, for every $n\in\mathbb{N}$ there
exists a finite interval $E_n\subset \mathbb{N}$ such that
$\|z_n\|=\abs{E_n^\ast(z_n)}$. Clearly we may assume that
$E_n\subset(m_{n-1},k_n]$. Then $\|z_n-y_n\|\geq
\abs{E_n^\ast(z_n-y_n)}=\|z_n\|$ and therefore $\lim\|z_n-y_n\|\ge
\lim\|z_n\|=1$.

We are ready now to apply Maurey's theorem (Theorem
\ref{th.Maurey-2}). To this end, we fix a positive integer
$N\in\mathbb{N}$, which will be chosen properly at the end of the
proof, and we set $\epsilon=2^{-N}$. After $N$ iterated applications
of Theorem \ref{th.Maurey-2} we find a sequence $(v_n)$ in the set
$K$ such that: $(v_n)$ is an approximate fixed point sequence for
the map $T$ (which implies that $\lim\|v_n\|=1$) and further
$\lim\|v_n-z_n\|=\epsilon$ and $\lim\|v_n-y_n\|=1-\epsilon$.
Therefore, for all sufficiently large $n\in\mathbb{N}$ the following
hold:
\begin{enumerate}
    \item $\|v_n\|>1-\frac{\epsilon}{2}$;
    \item $\|v_n-z_n\| <3\epsilon/2$ and
    $\|v_n-y_n\|<1-\frac{\epsilon}{2}$;
    \item $\abs{B^\ast(z_n)}<\epsilon/2$ (since $(z_n)$ is weakly
    null).
\end{enumerate}
We also set $S_n=(m_{n-1},k_n]$ so that we have $S_1<S_2<\ldots$.
Concerning the sequence $(v_n)$ in the set $K$ and the sequence of
intervals $(S_n)$ we prove the following two claims.

\begin{claim}\label{claim-first}
For all sufficiently large $n$, the support of $v_n$ is essentially
contained in the interval $S_n$, in the sense that if $S$ is any
interval with $S\cap S_n=\emptyset$ then
$\abs{S^\ast(v_n)}<3\epsilon/2$.
\end{claim}
Indeed, we know that $\text{supp}(z_n)\subset (m_{n-1},k_n]=S_n$.
Therefore, if $S$ is any interval with $S\cap S_n=\emptyset$ then
$S^\ast(z_n)=0$ and hence
\begin{equation*}
\abs{S^\ast(v_n)}= \abs{S^\ast(v_n-z_n)} \leq \|v_n-z_n\|
<\frac{3\epsilon}{2}.
\end{equation*}

\begin{claim}\label{claim-second}
For all sufficiently large $n$, there exist intervals $L_n<R_n$ such
that $S_n=L_n\cup R_n$ and $L_n^\ast(v_n)< -1+7\epsilon$,
$R_n^\ast(v_n)> 1-2\epsilon$.
\end{claim}
We fix a sufficiently large positive integer $n$. Since
$\|v_n\|>1-\frac{\epsilon}{2}$, it follows that there exists a
finite interval $F_n\subset \mathbb{N}$ such that
$\abs{F_n^\ast(v_n)}>1-\frac{\epsilon}{2}$. If $k_n<F_n$, we know by
the previous claim that $\abs{F_n^\ast(v_n)}<3\epsilon/2$, which is
a contradiction. Moreover, if we assume that $F_n\le k_n$ then
$F_n\cap (k_n,m_n]=\emptyset$ and the choice of $(y_n)$ implies
$F_n^\ast(y_n)=0$. Thus,
\begin{equation*}
\abs{F_n^\ast(v_n)} = \abs{F_n^\ast(v_n-y_n)} \leq \|v_n-y_n\|<
1-\frac{\epsilon}{2},
\end{equation*}
which is also a contradiction. By this discussion it is clear that
$\min F_n \le k_n < \max F_n$. Now we set $R_n=F_n\cap [1,k_n]$ and
we estimate
\begin{equation*}
 1-\frac{\epsilon}{2}< \abs{F_n^\ast(v_n)} \le \abs{R_n^\ast(v_n)}
 + \abs{(F_n\setminus R_n)^\ast(v_n)} < \abs{R_n^\ast(v_n)} +
 \frac{3\epsilon}{2},
\end{equation*}
where the last inequality follows by Claim \ref{claim-first}.
Therefore, $\abs{R_n^\ast(v_n)}>1-2\epsilon$. Passing to a
subsequence, we may assume that either $R_n^\ast(v_n)>1-2\epsilon$
for all sufficiently large $n$ or $R_n^\ast(v_n)<-1+2\epsilon$ for
all sufficiently large $n$. We suppose that the first possibility
happens, as the second one is treated similarly (interchanging the
roles of $L_n$ and $R_n$). Consequently, for the interval $R_n$ we
have $\max R_n=k_n$ and $R_n^\ast(v_n)>1-2\epsilon$.

On the other hand, we observe that
\begin{equation*}
\abs{B^\ast(v_n)} \le \abs{B^\ast(v_n-z_n)} + \abs{B^\ast(z_n)} \leq
\|v_n-z_n\|+\frac{\epsilon}{2} <2\epsilon.
\end{equation*}
We note that the sequence $(v_n)$ is not necessarily weakly null.
However, $v_n$ is close to $z_n$ and hence $\abs{B^\ast(v_n)}$ is
very small. We next set $G_n=[1,\min R_n)$ (possibly empty) and
$W_n=(k_n, +\infty)$. Then,
\begin{align*}
    2\epsilon > \abs{B^\ast(v_n)} &=
    \abs{G_n^\ast(v_n)+R_n^\ast(v_n)+W_n^\ast(v_n)}\\
    &\ge R_n^\ast(v_n)- \abs{G_n^\ast(v_n)}-\abs{W_n^\ast(v_n)}\\
    &> 1-2\epsilon - \abs{G_n^\ast(v_n)} -
    \frac{3\epsilon}{2}.
\end{align*}
Therefore $G_n$ is non-empty and $\abs{G_n^\ast(v_n)}> 1-
\frac{11\epsilon}{2}$. However, if $G_n^\ast(v_n)>1-
\frac{11\epsilon}{2}$, then it would follow
\begin{equation*}
\abs{B^\ast(v_n)} \ge R_n^\ast(v_n) + G_n^\ast(v_n)-
\abs{W_n^\ast(v_n)} \geq 2-9\epsilon,
\end{equation*}
which is a contradiction. Hence, $G_n^\ast(v_n)<
-1+\frac{11\epsilon}{2}$. Further, we observe that we can not have
$G_n<S_n$, since in this case it would follow
$\abs{G_n^\ast(v_n)}<\frac{3\epsilon}{2}$. Consequently, $\max
G_n>m_{n-1}$ which clearly implies $\min R_n>m_{n-1}+1$. Finally, we
set $L_n=G_n\cap (m_{n-1},k_n]$ and we estimate
\begin{equation*}
    -1+\frac{11\epsilon}{2} > G_n^\ast(v_n) = L_n^\ast(v_n) +(G_n\setminus
    L_n)^\ast(v_n) \ge L_n^\ast(v_n) - \frac{3\epsilon}{2}.
\end{equation*}
We deduce that $L_n^\ast(v_n) <-1+7\epsilon$. Therefore, the
intervals $L_n<R_n$ satisfy the following: $S_n=L_n\cup R_n$,
$R_n^\ast(v_n)> 1-2\epsilon$ and $L_n^\ast(v_n) <-1+7\epsilon$. The
proof of the claim is now complete.

Using the construction and the properties of the sequences $(v_n)$
and $(S_n)$, we can reach the final contradiction and finish the
proof of the theorem. Indeed, we fix a sufficiently large
$n\in\mathbb{N}$ and we consider the intervals $D=(k_n,m_n]$ and
$S=R_n\cup D\cup L_{n+1}$. Then, using Claim \ref{claim-first} and
Claim \ref{claim-second}  we have
\begin{align*}
    S^\ast(v_n)&= R_n^\ast(v_n) + (D\cup L_{n+1})^\ast(v_n) >
    1-2\epsilon - \frac{3\epsilon}{2} =
    1-\frac{7\epsilon}{2}\\
    S^\ast(v_{n+1}) &= (R_n\cup D)^\ast(v_{n+1})+
    L_{n+1}^\ast(v_{n+1}) < \frac{3\epsilon}{2} -1 +
    7\epsilon =-1 + \frac{17\epsilon}{2}.
\end{align*}
Therefore,
\begin{equation*}
\|v_n-v_{n+1}\| \ge \abs{S^\ast(v_n-v_{n+1})} =
\abs{S^\ast(v_n)-S^\ast(v_{n+1})} \ge 2-12\epsilon.
\end{equation*}
If $\epsilon$ has been chosen small enough, then we have a
contradiction, since $\|v_n-v_{n+1}\| \le diam(K)=1$.
\end{proof}

\bibliographystyle{amsplain}

\end{document}